\newtheorem{theorem}{Theorem}[section]
\theoremstyle{definition}
\newtheorem{proposition}[theorem]{Proposition}
\newtheorem{remark}[theorem]{Remark}
\def\BZ{\mathbbm Z}
\def\BR{\mathbbm R}
\def\BC{\mathbbm C}
\def\calF{\mathcal F}
\def\calN{\mathcal N}
\def\calD{\mathcal D}
\def\calT{\mathcal T}
\def\ti{\widetilde}
\def\SL{\mathrm{SL}}
\def\pt{\partial}
\def\be{\begin{equation}}
\def\ee{\end{equation}}
\def\z{{\hat{z}}}
\def\rdet{\mathrm{det}^r_{\calN (\pi)}}
\newcommand{\rvline}{\hspace*{-\arraycolsep}\vline\hspace*{-\arraycolsep}}
\renewcommand\thepart{\@Roman\c@part}%
\renewcommand\part{%
   \if@noskipsec \leavevmode \fi
   \par
   \addvspace{6.7ex}%
   \@afterindentfalse
   \secdef\@part\@spart}
\def\@part[#1]#2{%
    \ifnum \c@secnumdepth >\m@ne
      \refstepcounter{part}%
      \addcontentsline{toc}{part}{Part~\thepart.\ #1}%
    \else
      \addcontentsline{toc}{part}{#1}%
    \fi
    {\parindent \z@ \raggedright
     \interlinepenalty \@M
     \normalfont
     \ifnum \c@secnumdepth >\m@ne
       \centering\large\scshape \partname~\thepart.%
       \hspace{1ex}%
     \fi%
     \large\scshape #2%
     \markboth{}{}\par}%
    \nobreak
    \vskip 4.7ex
    \@afterheading}
  \def\@spart#1{
  \refstepcounter{part}%
  \addcontentsline{toc}{part}{#1}%
    {\parindent \z@ \raggedright
     \interlinepenalty \@M
     \normalfont
     \centering\large\scshape #1\par}%
     \nobreak
     \vskip 4.7ex
     \@afterheading}
\renewcommand*\l@part[2]{%
  \ifnum \c@tocdepth >-2\relax
    \addpenalty\@secpenalty
    \addvspace{0.75em \@plus\p@}%
    \begingroup
      \parindent \z@ \rightskip \@pnumwidth
      \parfillskip -\@pnumwidth
      {\leavevmode
       \normalsize \bfseries #1\hfil \hb@xt@\@pnumwidth{\hss #2}}\par
       \nobreak
       \if@compatibility
         \global\@nobreaktrue
         \everypar{\global\@nobreakfalse\everypar{}}%
      \fi
    \endgroup
  \fi}
\def\l@subsection{\@tocline{2}{0pt}{2pc}{6pc}{}}
\begin{document}
\title[The (twisted/$L^2$)-Alexander polynomial of ideal triangulations]{
  The (twisted/$L^2$)-Alexander polynomial of ideally 
triangulated 3-manifolds}
\author{Stavros Garoufalidis}
\address{
International Center for Mathematics, Department of Mathematics \\
Southern University of Science and Technology \\
Shenzhen, China
\newline
{\tt \url{http://people.mpim-bonn.mpg.de/stavros}}}
\email{stavros@mpim-bonn.mpg.de}

\author{Seokbeom Yoon}
\address{International Center for Mathematics \\
Southern University of Science and Technology \\
Shenzhen, China \newline
{\tt \url{http://sites.google.com/view/seokbeom}}}
\email{sbyoon15@gmail.com}

\keywords{Alexander polynomial, twisted Alexander polynomial, 
	$L^2$-Alexander torsion,  Neumann--Zagier matrices, 
	ordered ideal triangulation, Mahler measure,  Fuglede-Kadison determinant}


\date{3 January 2024}

\begin{abstract}
  We establish a connection between the Alexander polynomial of a knot and its
  twisted and $L^2$-versions with the triangulations that appear in
  3-dimensional hyperbolic geometry. Specifically, we introduce twisted
  Neumann--Zagier matrices of ordered ideal triangulations  and use them to provide
  formulas for the Alexander polynomial and its variants, the twisted Alexander
  polynomial and the $L^2$-Alexander torsion.
\end{abstract}

\maketitle

{\footnotesize
\tableofcontents
}


\section{Introduction}
\label{sec.intro}

The Alexander polynomial is a fundamental invariant of knots that dates back to
the origins of algebraic topology \cite{Alexander}. It has been studied time and
again from various points of view that include  twisting by a representation
\cite{Wada94, Lin2001}, or considering $L^2$-versions \cite{Luck2002, DFL15}. 
There are numerous results and surveys to this subject that the reader may consult
that include~\cite{friedl2011survey, DFJ2012, Kitano2015}. 

The goal of the paper is to establish a connection of this classical topological
invariant and its variants with the triangulations that appear in  3-dimensional
hyperbolic geometry \cite{Thurston}.   
These triangulations involve ideal tetrahedra, which one can think of as tetrahedra
with their vertices removed, whose faces are identified in pairs so as to obtain
the interior of a compact 3-manifold. Under such an identification, an edge can lie in
more than one tetrahedron, or said differently, going around an edge, one traverses
several tetrahedra, possibly with repetition. Keeping track of the total number a
tetrahedron winds around an edge (in each of its possible three ways) gives rise to a
pair of Neumann--Zagier matrices~\cite{NZ}. An ideal triangulation of such a manifold
lifts to an ideal triangulation of its universal cover, and this gives rise to twisted
Neumann--Zagier matrices; see Section~\ref{sec.nz} for details. 

Our main theorems provide explicit relations of the twisted Neumann--Zagier matrices
with the Alexander polynomial and its variants, the twisted Alexander polynomial and 
the $L^2$-Alexander torsion (Theorems~\ref{thm.1}--\ref{thm.3}). These relations
follow from a connection between the twisted Neumann-Zagier and Fox calculus
\cite{Fox1} which we will discuss in Section~\ref{sec.fox}.

The paper is organized as follows. 
In Section~\ref{sec.nz}, we briefly recall Neumann--Zagier matrices and introduce
their twisted version. In Section~\ref{sec.det}, we present our main theorems and their
corollaries. In Section~\ref{sec.fox}, we show that twisted Neumann--Zagier matrices
can be obtained from Fox calculus and prove our main theorems. We give an explicit
computation for the figure-eight knot and verify our theorems in 
Section~\ref{sec.example}.

\section{Twisted Neumann--Zagier matrices}
\label{sec.nz}

In this section we briefly recall ideal triangulations of 3-manifolds, 
their gluing equation and Neumann--Zagier matrices following~\cite{Thurston,NZ},
and introduce their twisted versions. 
Fix a compact 3-manifold $M$ with torus boundary and $\calT$ an ideal
triangulation of the interior of $M$. We denote the edges and the tetrahedra of
$\calT$ by $e_i$ and by $\Delta_j$, respectively, for $1 \leq i,j \leq N$. Note that
the number of edges is equal to that of tetrahedra. Every tetrahedron $\Delta_j$ is
equipped with shape parameters, i.e. each edge of $\Delta_j$ is assigned to one
shape parameter among $z_j, z_j'$ and $z_j''$ with opposite edges having same
parameters as in Figure~\ref{fig.tetrahedron}. If $\calT$ is \emph{ordered}, i.e. 
if every tetrahedron has vertices labeled with $\{0,1,2,3\}$ and every face-pairing
respects the vertex-order, then we assign the edges $(01)$  and $(23)$ of each
tetrahedron $\Delta_j$ to the shape parameter $z_j$.  

\begin{figure}[htpb!]
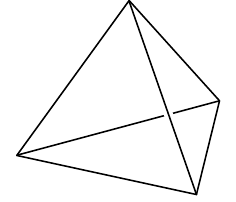
\caption{A tetrahedron with shape parameters.}
\label{fig.tetrahedron}
\end{figure}

The \emph{gluing equation matrices} $G, G'$ and $G''$ of $\calT$ are $N \times N$
integer matrices whose rows and columns are indexed by the edges and by the
tetrahedra of $\calT$, respectively. The $(i,j)$-entry of $G^\square$ for 
$\square \in \{ \ , ' , ''\}$ is the number of edges of $\Delta_j$ assigned to the
shape parameter $z_j^\square$ and identified with the edge $e_i$ in $\calT$. 
The \emph{Neumann--Zagier matrices} of $\calT$ are  defined as the differences of
the gluing equation matrices:
\be
A:=G-G' , \qquad  
B:=G''-G' \in M_{N \times N} ( \BZ  ) \, .
\ee

We now define a twisted version of the above matrices. These are essentially 
the Neumann--Zagier matrices of the ideal triangulation $\ti \calT$ of the universal
cover of $M$ obtained by pulling back $\calT$. We choose a lift $\ti e_i$ of $e_i$
and $\ti \Delta_j$ of $\Delta_j$ for all  $1 \leq i, j \leq N$ so that every edge and
tetrahedron of $\ti \calT$ is expressed as $\gamma \cdot \ti e_i$ or
$\gamma \cdot \ti \Delta_j$ for $\gamma \in \pi:=\pi_1(M)$.
Analogous to the gluing equation matrices, let $G_\gamma^\square$ for
$\square \in \{ \ , ' , ''\}$ and $\gamma \in \pi$ denote $N\times N$ integer matrices 
whose $(i,j)$-entry is the number of edges of $\gamma \cdot \ti \Delta_j$ assigned
to the shape parameter $z_j^\square$ and identified with the edge $\ti e_i$ in
$\ti \calT$.  We define the \emph{twisted gluing equation matrices} of $\calT$ by
\be
\label{eqn.Guniv}
\mathbf{G}^\square := 
\sum_{\gamma \in \pi}  G^\square_\gamma \otimes \gamma 
\in 
M_{N \times N}( \BZ [\pi] ) 
\ee
and the \emph{twisted Neumann--Zagier matrices} of $\calT$ by
\be
\mathbf{A} :=\mathbf{G} - \mathbf{G}', \qquad
\mathbf{B} :=\mathbf{G}'' - \mathbf{G}'
\in M_{N \times N} ( \BZ [\pi] ) \,.
\ee
The above notation differs slightly from the one used in~\cite{GY:twistedNZ};
hopefully this will not cause any confusion.  
Note that $G^\square_\gamma$ is the zero matrix for all but finitely many
$\gamma$, hence the sum in \eqref{eqn.Guniv} is finite. Since the above matrices
are well-defined after fixing lifts of each edge and tetrahedron of $\calT$,
a different choice of lifts changes $\mathbf{G}^\square$, $\mathbf{A}$
and $\mathbf{B}$ by multiplication from the left or right by the same diagonal
matrix with entries in $\pi$. This ambiguity propagates to any invariant constructed
using these matrices.

The Neumann--Zagier matrices of an ideal triangulation satisfy a key symplectic
property~\cite{NZ} which has been the source of many invariants in quantum topology.
In particular, it follows that $A B^T$ is a symmetric matrix. This property generalizes
for twisted Neumann--Zagier matrices
\be
\label{ABt}
\mathbf{A} \, \mathbf{B}^\ast = 
\mathbf{B} \, \mathbf{A}^\ast
\ee
where the adjoint $X^\ast$ of a matrix $X \in M_{N \times N}(\BZ[\pi])$ is given by
the transpose followed by the involution of $\BZ[\pi]$ defined by
$\gamma \mapsto \gamma^{-1}$ for all $\gamma \in \pi$. The above equation can be
proved by repeating the same argument as in the proof of 
\cite[Theorem 1.2]{GY:twistedNZ} or \cite{choi06}.

One important aspect of our results is the use of ordered ideal triangulations. 
It is known that every 3-manifold with nonempty
boundary has such a triangulation~\cite{BP:branched}. The choice of an ordered
triangulation breaks the symmetry between the two Neumann--Zagier matrices, and
distinguishes the $\mathbf{B}$ among the two. 


\section{Alexander invariants from twisted NZ matrices}
\label{sec.det}

In this section we express the Alexander polynomial and its twisted and $L^2$-versions
in terms of the twisted Neumann--Zagier matrix $\mathbf{B}$.
Throughout the section, we fix 
\begin{itemize}
\item[$(\dagger)$] a compact 3-manifold $M$ with torus boundary,
an \emph{ordered} ideal triangulation $\calT$ of the interior of $M$ and 
a group homomorphism $\alpha : \pi \rightarrow \BZ$. 
\end{itemize}

\subsection{Alexander polynomial}

The homomorphism $\alpha$ in $(\dagger)$ gives rise to a homomorphism 
$\alpha : \BZ[\pi] \rightarrow \BZ[\BZ] \simeq \BZ[t^{\pm1}]$ of group rings,
and we define
\be
\label{ABa}
\mathbf{A}_\alpha(t):= \alpha(\mathbf{A}), \qquad
\mathbf{B}_\alpha(t) := \alpha(\mathbf{B}) \in M_{N \times N}(\BZ[t^{\pm1}]) \,.
\ee
Our first theorem relates the determinant of one of these matrices
with the Alexander polynomial $\Delta_\alpha(t)$ associated with $\alpha$,
assuming that this is well-defined, that is, the (cellular) chain complex of $M$
with local coefficient twisted by $\alpha$ is acyclic. A typical case is $M$ being the
complement of a knot in a homology sphere with $\alpha$ being
the abelianization map. Note that the determinants of $\mathbf{A}_\alpha(t)$ and 
$\mathbf{B}_\alpha(t)$ as well as $\Delta_\alpha(t)$ are  well-defined up to 
multiplication by $\pm t^k$, $k \in \BZ$. Below, we denote by $\doteq$ the
equality of Laurent polynomials (or functions of $t$) up to multiplication by
$\pm t^k$, $k \in \BZ$.

\begin{theorem} 
\label{thm.1}
Fix $M$,$\calT$ and $\alpha$ as in $(\dagger)$.  
Then either $\det \mathbf{B}_\alpha(t)=0$ or
\be
\label{eqn.Bsim}
\det \mathbf{B}_\alpha(t) \doteq  \frac{\Delta_\alpha(t)}{t-1} \, (t^n-1)^{m}
\ee
for some $n \geq 0$ and $m \geq 1$.	
\end{theorem}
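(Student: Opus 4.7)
The plan is to use the connection between $\mathbf{B}$ and Fox calculus, to be established in Section~\ref{sec.fox}, to identify $\det \mathbf{B}_\alpha(t)$ with the determinant of a Fox Jacobian for a natural presentation of $\pi=\pi_1(M)$ coming from $\calT$.

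First, I would invoke the forthcoming identification: the ordered ideal triangulation $\calT$ together with the shape-parameter convention produces a deficiency-zero presentation of $\pi$ with $N$ generators (one per tetrahedron $\Delta_j$) and $N$ relations (one per edge cycle $e_i$), and $\mathbf{B}$ equals, up to the diagonal-conjugation ambiguity already present in its definition, the Fox Jacobian $(\partial r_i/\partial x_j)$ of this presentation. Consequently, $\mathbf{B}_\alpha(t)$ coincides with the boundary map $\partial_2 \colon C_2 \to C_1$ of the $\BZ[t^{\pm 1}]$-twisted cellular chain complex of a 2-complex $X$ with $\pi_1(X) \cong \pi$, while $\partial_1 \colon C_1 \to C_0$ sends the $j$-th basis vector to $t^{\alpha(x_j)}-1$.

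Second, I would apply standard Reidemeister-torsion bookkeeping to this chain complex. If the twisted complex is not acyclic, then $\mathbf{B}_\alpha(t)$ fails to be injective and $\det \mathbf{B}_\alpha(t)=0$, giving the first alternative of the theorem. In the acyclic case, the alternating product of the orders of the twisted homology groups equals $\det \mathbf{B}_\alpha(t)$ up to $\pm t^k$. Comparing the homology of $X$ with that of $M$ and invoking the classical identity $\tau(M) \doteq \Delta_\alpha(t)/(t-1)$ for Reidemeister torsion of a 3-manifold group with abelianization-like $\alpha$ supplies the $\Delta_\alpha(t)$ numerator and the $(t-1)$ denominator, the latter being the order of $H_0(X;\BZ[t^{\pm 1}]) \cong \BZ[t^{\pm 1}]/(t-1)$.

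Third, the discrepancy between the 2-complex $X$ and the 3-manifold $M$ must be absorbed. The presentation produced by $\calT$ is typically not a spine of $M$; the extra cells needed to recover the correct homotopy type come from the peripheral torus and are carried by peripheral elements. I expect these to contribute a correction that factors as $(t^n-1)^m$, where $n$ is the $\alpha$-image of a distinguished peripheral element (the one naturally singled out by the ordering) and $m$ counts the number of such extra cells. Assembling these three inputs yields $\det \mathbf{B}_\alpha(t) \doteq \Delta_\alpha(t)\,(t^n-1)^m/(t-1)$.

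The main obstacle will be pinning down this correction factor precisely: establishing that it takes the clean form $(t^n-1)^m$, rather than an arbitrary polynomial, seems to rely essentially on the ordered structure of $\calT$ (as emphasized in Section~\ref{sec.nz}), which forces the peripheral contributions to share a common $\alpha$-exponent $n$. Disentangling the precise combinatorics of ``edges around a tetrahedron'' in the universal cover --- the data encoded by $\mathbf{B}$ via lifts $\ti e_i,\ti \Delta_j$ --- from the genuine Fox Jacobian is where I would expect most of the work to lie.
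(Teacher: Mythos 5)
Your first step---identifying $\mathbf{B}_\alpha(t)$ with the square Fox Jacobian of a deficiency-zero presentation with one generator per tetrahedron and one relation per edge---cannot be correct, and the rest of the argument is built on it. By the fundamental formula of Fox calculus, $\sum_j \frac{\partial r_i}{\partial x_j}(x_j-1)=r_i-1$, so after applying $\alpha$ the columns of \emph{any} square Fox Jacobian of a presentation of $\pi$ satisfy the linear relation with coefficient vector $(t^{\alpha(x_1)}-1,\ldots,t^{\alpha(x_N)}-1)^T$, which is nonzero whenever $\alpha$ is nontrivial; hence such a determinant vanishes identically. That contradicts the second alternative of the theorem and the paper's $4_1$ computation, where $\det\mathbf{B}_\alpha(t)\doteq(t-1)(t^2-3t+1)$. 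The presentation actually produced by $\calT$ lives on the dual spine $\calD$, whose generators are the $2N$ faces of $\calT$ (edges of $\calD$), giving a deficiency-one presentation after collapsing a spanning tree; and $\mathbf{B}$ is not the Fox Jacobian $\partial_2$ but rather, up to a diagonal matrix of units, the \emph{product} $\partial_2\cdot\partial_{1,B}$, where $\partial_{1,B}$ is the half of $\partial_1$ singled out by pairing the faces of each tetrahedron via the $Z$-smoothing (Propositions~\ref{prop.fox} and~\ref{prop.vec}). This product structure is the missing idea.

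Your third step is also misattributed: $\calD$ is already a spine of $M$, so there are no ``extra peripheral cells'' to absorb. The factor $(t^n-1)^m$ arises instead from the $2N\times 2N$ determinant $\det\bigl(\begin{smallmatrix}\partial_{1,B}\\ \partial_1\end{smallmatrix}\bigr)=\det\bigl(\begin{smallmatrix}\partial_{1,B}\\ \partial_1-\partial_{1,B}\end{smallmatrix}\bigr)$, which, having at most two nonzero entries per row and column, factors into cyclic blocks indexed by the components $Z_i$ of the $Z$-curves and equals $\prod_i(t^{\alpha(Z_i)}-1)$. The ordering hypothesis then enters exactly where you guessed it should, but through Proposition~\ref{prop.Bloop}: the $Z$-curves homotope to disjoint peripheral curves, so they are parallel copies of a single curve $\gamma$ and all $\alpha(Z_i)=\pm n$ with $n=\alpha(\gamma)$ (or some $Z_i$ is nullhomotopic, forcing $\det\mathbf{B}_\alpha(t)=0$). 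Your identification of the torsion $\Delta_\alpha(t)/(t-1)$ as one ingredient, and your instinct that the ordering forces a common exponent, are both correct; but without the factorization $\mathbf{B}\doteq\partial_2\,\partial_{1,B}$ and the $Z$-curve computation of the complementary determinant, the argument does not go through.
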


The matrix $\mathbf{A}_\alpha(t)$ also satisfies a similar equation, but only
modulo 2. See Remark~\ref{rmk.A} for details. 

\subsection{Twisted Alexander polynomial}

The homomorphism $\alpha$ in  Theorem~\ref{thm.1} can be replaced by 
$\alpha \otimes \rho$ for any representation $\rho:\pi \rightarrow \SL_n(\BC)$,
provided that the twisted Alexander polynomial $\Delta_{\alpha \otimes \rho}(t)$
associated with $\alpha \otimes \rho$ is defined. This happens when the (cellular)
chain complex of $M$ with local coefficient twisted by $\alpha \otimes \rho$ is
acyclic. A typical case is $M$ being the complement of a hyperbolic knot in a
homology sphere with $\rho : \pi \rightarrow \SL_2(\BC)$ being a lift of the geometric
representation.

\begin{theorem} 
\label{thm.2}
Fix $M$,$\calT$ and $\alpha$ as in $(\dagger)$ 
and  a representation $\rho : \pi \rightarrow \mathrm{SL}_n(\BC)$.
Then either $\det \mathbf{B}_{\alpha \otimes \rho} (t) = 0$ or
\begin{align}
\label{eqn.B3}
\det \mathbf{B}_{\alpha \otimes \rho} (t) & \doteq \Delta_{\alpha \otimes \rho}(t) \,
\det (\rho (\gamma)\, t^ {\alpha(\gamma)} - I_n)^{m}
\end{align}
for some peripheral curve $\gamma$ and $m \geq 1$ 
where $I_n$ is the identity matrix of rank $n$.
\end{theorem}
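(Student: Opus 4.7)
My strategy is to adapt the proof of Theorem~\ref{thm.1} to the twisted setting, using the Fox-calculus interpretation of $\mathbf{B}$ that will be established in Section~\ref{sec.fox}, together with Wada's determinant formula~\cite{Wada94} for twisted Alexander polynomials obtained from group presentations.

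First, I would use the identification of $\mathbf{B}$ with a Fox Jacobian. As Section~\ref{sec.fox} will show, the ordered ideal triangulation $\calT$ gives rise to a square presentation
\begin{equation*}
\pi = \pi_1(M) = \langle g_1, \ldots, g_N \mid r_1, \ldots, r_N \rangle,
\end{equation*}
with one generator per tetrahedron and one relation per edge, in such a way that $\mathbf{B}$ coincides with the Fox Jacobian $(\pt r_i/\pt g_j)_{i,j}$ up to left and right multiplication by diagonal matrices with entries in $\pi$. Applying $\alpha \otimes \rho$ and taking the determinant of the resulting $Nn \times Nn$ block matrix gives $\det \mathbf{B}_{\alpha\otimes\rho}(t)$ up to multiplication by a monomial $\pm t^k$, which is absorbed in $\doteq$.

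Second, I would invoke Wada's formula: provided that the chain complex with local coefficients twisted by $\alpha \otimes \rho$ is acyclic, one has
\begin{equation*}
\Delta_{\alpha\otimes\rho}(t) \;\doteq\; \frac{\det J_j(t)}{\det\bigl(\rho(g_j)\,t^{\alpha(g_j)} - I_n\bigr)}
\end{equation*}
for any $j$ with $\det(\rho(g_j)\,t^{\alpha(g_j)} - I_n) \neq 0$, where $J_j(t)$ is the twisted Fox Jacobian with the $j$-th column deleted. Because the presentation is square (the Euler characteristic of $M$ vanishes since $\pt M$ is a torus), the relators satisfy a single fundamental identity in the free group, carried by a cycle of edges that sweeps out a fundamental domain of the boundary torus. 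Applying Fox calculus to this identity produces a $\BZ[\pi]$-linear dependence among the rows of the Jacobian, which after the twist by $\alpha \otimes \rho$ expresses $\det \mathbf{B}_{\alpha\otimes\rho}(t)$ as $\det J_j(t)$ multiplied by $\det(\rho(\gamma)\,t^{\alpha(\gamma)}-I_n)^m$ for a peripheral element $\gamma$ determined by the identity and some $m \geq 1$. Combining with Wada's formula yields~\eqref{eqn.B3}.

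The main obstacle will be showing rigorously that the fundamental row-dependence of the Fox Jacobian is genuinely peripheral, and that it contributes a pure power of $\det(\rho(\gamma)\,t^{\alpha(\gamma)}-I_n)$ rather than a more complicated factor. This requires tracing how the cusp torus is triangulated by the vertex links of the ideal tetrahedra and identifying the integer $m \geq 1$ with a combinatorial multiplicity of the peripheral cycle in the link triangulation. Step one is a bookkeeping translation of the gluing data into Fox derivatives, and the algebraic half of step two is classical, so the geometric identification of the correction factor as a power of $\det(\rho(\gamma)\,t^{\alpha(\gamma)}-I_n)$ is where the substantive content of the argument lies.
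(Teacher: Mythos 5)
Your overall strategy (Fox calculus plus a torsion/Wada-type formula, with the substantive work being the identification of the correction factor as peripheral) is in the right spirit, but your structural setup in step one is not what actually holds, and the argument breaks there. The ordered triangulation does \emph{not} produce a square presentation with one generator per tetrahedron whose Fox Jacobian is $\mathbf{B}$. The presentation the paper uses comes from the dual $2$-complex $\calD$: its generators are the $2N$ dual edges (one per face of $\calT$), reduced to $N+1$ after collapsing a spanning tree, with $N$ relators $r_1,\dots,r_N$ (one per edge of $\calT$), so it has deficiency one. The matrix $\mathbf{B}=\mathbf{G}''-\mathbf{G}'$ is \emph{not} the Fox Jacobian of this (or any) presentation; Propositions~\ref{prop.fox} and~\ref{prop.vec} show instead that, up to diagonal unit matrices, $\mathbf{B}$ \emph{factors} as $\pt_2^T\,\pt_{1,B}^T$, where $\pt_2$ is the full $N\times 2N$ Fox Jacobian $\bigl(p(\pt r_i/\pt g_k)\bigr)$ and $\pt_{1,B}$ is the ``half'' of the boundary map $\pt_1\colon C_1(\calD)\to C_0(\calD)$ obtained by pairing the four faces of each tetrahedron into $Z$-adjacent pairs. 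Without this factorization your identification of $\det\mathbf{B}_{\alpha\otimes\rho}(t)$ with a twisted Fox Jacobian determinant is simply false (the two matrices have different shapes and different meanings), and Wada's formula in the form you quote (deleting the $j$-th column of a square Jacobian to get a square matrix) does not even parse for the presentation you posit.

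The second gap is the mechanism for the peripheral factor. You attribute it to ``a single fundamental identity among the relators'' producing a row dependence; since $\calD$ is an aspherical spine realizing a deficiency-one presentation there is no such nontrivial identity, and in any case this is not where the factor comes from. In the paper the factor is $\det\bigl(\begin{smallmatrix}\pt_{1,B}\\ \pt_1\end{smallmatrix}\bigr)=\det\bigl(\begin{smallmatrix}\pt_{1,B}\\ \pt_1-\pt_{1,B}\end{smallmatrix}\bigr)$, which is computed by following the $Z$-curves: after permuting rows and columns the matrix splits into cyclic blocks of the form~\eqref{eqn.shift}, yielding $\prod_i\det\bigl(\rho(Z_i)\,t^{\alpha(Z_i)}-I_n\bigr)$ over the components $Z_i$ of the $Z$-curves. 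The statement that this is a pure power of a single peripheral determinant (or that $\det\mathbf{B}_{\alpha\otimes\rho}(t)=0$) is exactly Proposition~\ref{prop.Bloop}: for an \emph{ordered} triangulation the $Z$-curves homotope to disjoint peripheral curves, hence are parallel copies of one curve $\gamma$ unless a component is nullhomotopic. You correctly sense that this peripherality is the substantive geometric content, but you have neither the correct algebraic identity feeding into it (the factorization through $\pt_{1,B}$) nor the correct object ($Z$-curves, not an identity among relators) whose peripherality must be established.
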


Note that if $\rho$ is the trivial 1-dimensional representation, we have 
$\mathbf{B}_{\alpha \otimes \rho}(t) = \mathbf{B}_\alpha(t)$ and 
$\Delta_\alpha(t)/(t-1) = \Delta_{\alpha \otimes \rho}(t)$ \cite{Wada94}.
Hence Theorem~\ref{thm.1} is a special case of Theorem~\ref{thm.2}.

\subsection{$L^2$-Alexander torsion}

In~\cite{DFL15} Dubois--Friedl--L\"{u}ck introduced the $L^2$-Alexander
 torsion as an $L^2$-version of the Alexander polynomial
\be
\tau^{(2)}(M,\alpha) : \BR^{+} \rightarrow [0,\infty), \qquad
t \mapsto  \tau^{(2)}(M,\alpha)(t)\,.
\ee
As the Alexander polynomial, $\tau^{(2)}(M,\alpha)$ is well-defined 
up to  multiplication by a function $t \mapsto t^r$ for $r \in \BR$. We will write 
$f \doteq g$ for functions $f$ and $g : \BR^{+} \rightarrow [0,\infty)$ if 
$f(t) = t^r g(t)$ for some $r \in \BR$. Briefly, for fixed $t>0$, 
$\tau^{(2)}(M,\alpha)(t)$ is defined to be the $L^2$-torsion of the chain 
complex of $\BR[\pi]$-modules
\be
\BR[\pi] \otimes_{\BZ[\pi]} C_\ast(\widetilde{M};\BZ)
\ee 
where $\ti M$ is the universal cover of $M$ and $\BR[\pi]$ is viewed as a
$\BZ[\pi]$-module using the homomorphism
\be
\label{eqn.at}
\alpha_t : \BZ[\pi] \rightarrow \BR[\pi], \quad g \mapsto t^{\alpha(g)} g \,.
\ee
The $L^2$-torsion of the above complex is defined in terms of the Fulgede-Kadison
determinant of matrices with entries in $\BR[\pi]$.
Roughly speaking, the Fulgede-Kadison determinant of a matrix $X$ is defined in
terms of the spectral density function of $X$, viewed as a map between direct sums
of the Hilbert space $\ell^2 (\pi)$ of squared-summable formal sums over $\pi$. We
refer to \cite{Luck2002, DFL15} for the  precise definition. However, we will not use
the definition, but only some basic properties for square matrices, such as
\be
\label{FKprops}
\begin{aligned}
\rdet(XY)&=\rdet(X)\, \rdet(Y) \, , \\
\rdet \begin{pmatrix}  X & 0 \\ 
Z& Y
\end{pmatrix} &=\rdet(X)\, \rdet (Y)  \, .
\end{aligned}
\ee
Here $X$ and $Y$ are square matrices with entries in $\BR[\pi]$, and $\rdet(X)$
denotes the regular Fuglede-Kadison determinant of $X$,  which equals to the
Fuglede-Kadison determinant of $X$ if $X$ has full rank, and zero otherwise.

We now consider the Fuglede--Kadison determinant of the twisted Neumann--Zagier
matrices and relate it with the $L^2$-Alexander torsion. 
Recall that the twisted Neumann--Zagier matrices are square matrices with entries
in the group ring $\BZ[\pi]$. We define a function
\be
\mathrm{det} (\mathbf{B},\alpha) :  \BR^{+} \rightarrow [0,\infty),\qquad
t \mapsto \rdet (\alpha_t(\mathbf{B}))
\ee
where $\alpha_t : \BZ[\pi] \rightarrow \BR[\pi]$ is the homomorphism given
in~\eqref{eqn.at}.

\begin{theorem}
\label{thm.3}
Fix $M$,$\calT$ and $\alpha$ as in $(\dagger)$.
Suppose that every component of the $Z$-curves of $\calT$ has infinite
order in $\pi$ (see Section~\ref{sub.curves} for the definition of $Z$-curves). 
Then we have
\be
\mathrm{det} (\mathbf{B},\alpha) \doteq  \tau^{(2)}(M,\alpha)
\, \mathrm{max} \{1, t^{n}\}
\ee
for some $n \in \BZ$.
\end{theorem}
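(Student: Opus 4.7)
The plan is to run the Fox calculus argument underlying Theorems~\ref{thm.1} and~\ref{thm.2}, but with the algebraic determinant replaced by the Fuglede--Kadison regular determinant $\rdet$, invoking its multiplicativity and block-triangular behavior~\eqref{FKprops}. Thus the overall skeleton mirrors the previous proofs, and the work concentrates on (i) transporting the Fox-calculus identification into the $L^2$ setting and (ii) identifying the extra correction factors and computing their Fuglede--Kadison determinants.

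First, from Section~\ref{sec.fox} the ordered ideal triangulation $\calT$ yields a CW structure on a deformation retract of $M$ whose cellular chain complex of the universal cover is a free $\BZ[\pi]$-complex with boundary maps expressible via Fox derivatives of an associated (balanced) presentation of $\pi$. Following the strategy of the proofs of Theorems~\ref{thm.1} and~\ref{thm.2}, I would identify $\mathbf{B}$ (up to the row/column $\pi$-diagonal ambiguity noted after \eqref{eqn.Guniv}) with this Fox Jacobian after an explicit block-triangular change of basis, whose additional diagonal blocks have entries of the form $\gamma_i - 1$, where the $\gamma_i \in \pi$ are the holonomies along the components of the $Z$-curves of $\calT$.

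Next, apply $\alpha_t$ and take $\rdet$. The $\pi$-diagonal ambiguity is invisible because $\rdet(\alpha_t(\gamma)) = 1$ for any $\gamma \in \pi$, while the block-triangular decomposition in \eqref{FKprops} produces
\[
\mathrm{det}(\mathbf{B},\alpha)(t) \;\doteq\; \tau^{(2)}(M,\alpha)(t) \cdot \prod_i \rdet\!\big(\alpha_t(\gamma_i - 1)\big),
\]
where the identification of $\rdet$ of the Fox Jacobian with $\tau^{(2)}(M,\alpha)(t)$ is the standard computation of the $L^2$-Alexander torsion via Fox calculus recalled in~\cite{DFL15}. For each component $\gamma_i$ of the $Z$-curves the hypothesis that $\gamma_i$ has infinite order in $\pi$ implies that $\alpha_t(\gamma_i - 1) = t^{\alpha(\gamma_i)}\gamma_i - 1$, viewed on $\ell^2(\langle \gamma_i\rangle) \cong \ell^2(\BZ)$, has Fuglede--Kadison determinant equal to $\max\{1, t^{\alpha(\gamma_i)}\}$ by the Mahler-measure formula for monomials; and since Fuglede--Kadison determinants are compatible with restriction to cyclic subgroups, the same value survives on $\ell^2(\pi)$. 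Setting $n = \sum_i \alpha(\gamma_i)$ then yields the stated identity.

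The main obstacle is carrying out the block reduction of $\mathbf{B}$ to the Fox Jacobian together with the $Z$-curve correction blocks in a way that is faithful for $\rdet$. In Theorems~\ref{thm.1}--\ref{thm.2} one works over a commutative ring and may freely cancel nonzero factors, whereas for $\rdet$ one must track full rank of each matrix in the factorization so that the multiplicative law \eqref{FKprops} applies, and one must ensure that the correction blocks really take the form $(\gamma_i - 1)$ with $\gamma_i$ of infinite order. The infinite-order hypothesis on the $Z$-curve components is precisely what guarantees nonzero, explicitly computable $\rdet$ for the correction factors and rules out pathological vanishing in the product. Once the block reduction is in place, the remainder is a routine manipulation with~\eqref{FKprops}.
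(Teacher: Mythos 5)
Your plan follows the paper's proof of Theorem~\ref{thm.3} essentially step for step: one runs the Fox-calculus/dual-complex argument of Theorems~\ref{thm.1}--\ref{thm.2} with $\rdet$ in place of $\det$, uses the multiplicativity and block-triangular properties~\eqref{FKprops} to split off $\tau^{(2)}(M,\alpha)(t)$ via \cite[Lemma~3.1]{DFL15}, and identifies the correction factor as a product of Fuglede--Kadison determinants of elements $1-\alpha_t(Z_i)^{\pm1}$ attached to the components of the $Z$-curves, evaluated by the Mahler-measure formula under the infinite-order hypothesis. The ``main obstacle'' you flag (performing the block reduction in a way faithful to $\rdet$) is exactly what the paper does via Propositions~\ref{prop.fox} and~\ref{prop.vec}, the splitting $\pt_1=\pt_{1,B}+(\pt_1-\pt_{1,B})$ into $Z$-adjacent pairs, and the explicit three-fold factorization of the cyclic-shift blocks~\eqref{eqn.shift}; you defer this, but the strategy is the same.

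There is, however, one concrete error in your last step: setting $n=\sum_i\alpha(\gamma_i)$ is wrong when the integers $\alpha(Z_i)$ have mixed signs. For the figure-eight triangulation of Section~\ref{sec.example} one has $\alpha(Z_1)=-1$ and $\alpha(Z_2)=1$, so your formula gives $n=0$ and a trivial correction factor, whereas the true product is $\max\{1,t\}\cdot\max\{1,t^{-1}\}\doteq\max\{1,t^{2}\}$. The point is that $\max\{1,t^{a}\}\doteq\max\{1,t^{|a|}\}$, so $\prod_i\max\{1,t^{\alpha(Z_i)}\}\doteq\max\{1,t^{n}\}$ with $n=\sum_i|\alpha(Z_i)|$, not $\sum_i\alpha(Z_i)$. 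The paper reaches the single-exponent form by combining this with Proposition~\ref{prop.Bloop}, which shows the $Z$-curve components are parallel peripheral curves, so all $\alpha(Z_i)$ agree up to sign. This is a fixable slip, but as written your claimed value of $n$ (and hence the displayed identity) is false in the simplest example.
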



\section{Fox calculus and twisted NZ matrices}
\label{sec.fox}

In this section, we discuss a connection between twisted Neumann--Zagier matrices 
and Fox calculus, and prove Theorems~\ref{thm.1}--\ref{thm.3}. 

\subsection{Fox calculus}

Let $M$ be a compact 3-manifold with torus boundary and $\calT$ an ideal
triangulation of the interior of $M$ with $N$ tetrahedra. The dual complex
$\calD$ of $\calT$ is a 2-dimensional cell complex with $2N$ edges and $N$ faces.
We choose an orientation of each edge and let $\calF_\calD$ be the free group
generated by the edges of $\calD$; if $\calT$ is ordered, we choose the orientation
by the one induced from the vertex-order. 

The faces of $\calD$ correspond to words $r_1,\ldots,r_N \in \calF_\calD$
well-defined up to conjugation. Two consecutive letters of $r_i$ ($1 \leq i \leq N$) 
correspond to two adjacent face pairings of $\calT$,
hence there is a shape parameter lying in between. Here we regard that the first
and the last letter of $r_i$ are also consecutive. Inserting such shape parameters
between the letters of $r_i$, we obtain a word $R_i$ whose length is two times that
of $r_i$. More precisely, let $\calF_\z$ be the free group generated by 
$\z^\square_j$ for $1 \leq j \leq N$ and $\square \in \{ \ , ' , ''\}$ (hence
$\calF_\z$ has
$3N$ generators) where $\z^\square_j$ is a formal variable corresponding to a shape
parameter $z^\square_j$. Then we define a word $R_i \in \calF_\calD \ast \calF_\z$ by
its $2k$-th letter to be the $k$-th letter of $r_i$
and its $(2k-1)$-st letter to be a generator of $\calF_\z$ corresponding to
the shape parameter lying between the $(k-1)$-st and the $k$-th letters of $r_i$.
Here $k \geq 1$ and the $0$-th letter of $r_i$  means the last letter of $r_i$.

We choose $N-1$ generators of $\calF_\calD$  forming a spanning tree in $\calD$ 
and define a map 
\begin{equation}
p : \calF_\calD \ast \calF_\z \rightarrow \pi
\end{equation} 
by eliminating those $N-1$ generators of $\calF_\calD$ and all generators
$\z^\square_j$ of $\calF_\z$. Note that the rest $N+1$ generators of 
$\calF_\calD$ with $N$ relators  $p(r_1), \ldots, p(r_N)$ give a presentation
of $\pi = \pi_1(M)$, hence the map $p$ is well-defined.

\begin{proposition} 
\label{prop.fox}
The twisted gluing equation matrices $\mathbf{G}^\square $ of $\calT$
agree with
\be
\label{eqn.matrix}
\begin{pmatrix}
p\left(\dfrac{\pt R_1}{\pt \z^\square_{1}}\right) & \cdots  &
p\left(\dfrac{\pt R_1}{\pt \z^\square_{N}}\right) \\
\vdots & & \vdots \\
p \left(\dfrac{\pt R_N}{\pt \z^\square_1}\right) & \cdots  &
p\left(\dfrac{\pt R_N}{\pt \z^\square_{N}}\right)
\end{pmatrix} \in M_{N \times N}(\BZ[\pi])
\ee
up to left multiplication by a diagonal matrix with entries in $\pi$.
\end{proposition}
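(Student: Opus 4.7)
The plan is to interpret the matrix of Fox derivatives in~\eqref{eqn.matrix} as a direct reading of the edge-cycle around $\ti e_i$ in the universal cover $\ti\calT$. Each letter of $R_i$ records one step in the cycle of tetrahedra around $\ti e_i$; the inserted shape-parameter letter at the $(2k-1)$-st position names which tetrahedron (and which of its three shape labels) is traversed at step $k$; the Fox derivative with respect to $\z_j^\square$ then picks out exactly those steps in which an edge of a translate of $\Delta_j$ carrying shape $z_j^\square$ is identified with $\ti e_i$. Once this dictionary is set up, the proposition will follow by inspection of the standard Fox formula.

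In more detail, I would first recall that the faces $f_i$ of $\calD$ correspond bijectively to the edges $e_i$ of $\calT$, and that the boundary word $r_i$ lists the cyclic sequence of face pairings met while circling $e_i$. Because opposite edges of a tetrahedron carry the same shape parameter, the inserted letter $\z_{j_k}^{\square_k}$ in $R_i$ is the unambiguous label of the tetrahedron $\Delta_{j_k}$ traversed at step $k$ together with the shape $z_{j_k}^{\square_k}$ assigned to the edge of $\Delta_{j_k}$ glued to $e_i$ at that step. Second, I would lift to $\ti\calT$: fix lifts $\ti e_i,\,\ti\Delta_j$ and let $w_k$ denote the prefix of $R_i$ of length $2k-2$. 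The key claim is that $p(w_k)=\gamma_k$, where $\gamma_k \ti\Delta_{j_k}$ is the $k$-th tetrahedron met around $\ti e_i$. This is an induction on $k$: each even-position letter of $R_i$ is a face-pairing generator of $\calF_\calD$ whose $p$-image is the corresponding deck transformation across a single face of $\ti\calT$, while $p(\z_j^\square)=1$ contributes nothing but marks the transition.

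With this dictionary, the remaining step is a routine Fox calculation. Since every $\z_j^\square$ appears only with positive exponent in $R_i$, the Leibniz rule for Fox derivatives gives
\be
\frac{\pt R_i}{\pt \z_j^\square} \;=\; \sum_{k\,:\,j_k=j,\;\square_k=\square} w_k
\ee
in $\BZ[\calF_\calD \ast \calF_\z]$, so applying $p$ and substituting $p(w_k)=\gamma_k$ yields
\be
p\!\left(\frac{\pt R_i}{\pt \z_j^\square}\right) \;=\; \sum_{k\,:\,j_k=j,\;\square_k=\square} \gamma_k \;=\; \sum_{\gamma \in \pi} G^\square_\gamma(i,j)\,\gamma,
\ee
which is by definition the $(i,j)$-entry of $\mathbf{G}^\square$. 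The stated ambiguity is then formal: $R_i$ and the lift $\ti e_i$ are only determined up to a cyclic rotation $R_i = uv$ versus $vu$, and since $p(R_i)=1$ in $\pi$ such a rotation multiplies $p(\pt R_i/\pt \z_j^\square)$ on the left by $p(u)^{-1}$ simultaneously for all $j$ and $\square$; assembling these factors over $i$ yields left multiplication by a diagonal matrix with entries in $\pi$.

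The main obstacle is the inductive identification $p(w_k)=\gamma_k$ in the lifting step. One must fix orientation conventions on the edges of $\calD$ so that each generator and its inverse correspond to the two directions across a single face pairing, and one must check that eliminating the $N-1$ spanning-tree generators of $\calF_\calD$ in the definition of $p$ does not disturb the bookkeeping. The latter is ultimately a consequence of the fact that collapsing the spanning tree turns $\calD$ into a presentation $2$-complex for $\pi$, so that the prefixes of $R_i$ continue to compute deck translates in $\ti\calT$ after the elimination.
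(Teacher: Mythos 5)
Your proposal is correct and follows essentially the same route as the paper: it identifies $p(\pt R_i/\pt \z_j^\square)$ as the sum of $p$-images of the prefixes of $R_i$ preceding each occurrence of $\z_j^\square$, recognizes these as the deck transformations carrying the chosen lift $\ti\Delta_j$ to the translates contributing $z_j^\square$ to $\ti e_i$, and absorbs the choice of starting tetrahedron around each $\ti e_i$ into the left diagonal factor. The paper handles your ``main obstacle'' exactly as you anticipate, by normalizing the tetrahedron lifts so that $p(d(\ti\Delta_{j_0},\ti\Delta_{j_1}))=1$ via the spanning tree, so the bookkeeping goes through.
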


\begin{proof} 
Let $\ti \calT$ be the ideal triangulation of the universal cover of $M$ induced from
$\calT$. For two tetahedra $\Delta$ and $\Delta'$ of $\ti \calT$ let
$d(\Delta, \Delta') \in \calF_\calD$ be a word representing an oriented curve that
starts at $\Delta$ and ends at $\Delta'$. We choose a lift $\ti \Delta_j$ of each
tetrahedron $\Delta_j$ of $\calT$ such that 
\be
\label{eqn.lift}
p\left(d(\ti \Delta_{j_0},\ti \Delta_{j_1})\right) =1 
\ee
for all $1 \leq j_0, j_1 \leq N$.
We also choose any lift $\ti e_i$ of each edge $e_i$ of $\calT$ so that 
the twisted gluing equation matrices $\mathbf{G}^\square$ are
determined. Precisely, the $(i,j)$-entry of $\mathbf{G}^\square$ is given by 
\be
\label{eqn.pt1}
\sum_\Delta p \left(d(\ti \Delta_1,\Delta) \right) \in \BZ[\pi]
\ee
where the sum is taken over all tetrahedra $\Delta$ of $\ti \calT$ contributing
$z_j^\square$ to $\ti e_i$. The index of $\ti \Delta_1$ can be replaced
by any $1 \leq j \leq N$ due to Equation~\eqref{eqn.lift}.

On the other hand, there is an initial tetrahedron, say  $\hat \Delta_{i}$, around 
$\ti e_i$ such that the word $r_i \in \calF_\calD$ is obtained by winding around the
edge $\ti e_i$ starting from $\hat \Delta_{i}$. Then it follows from the definition
of $R_i$ that 
\be
\label{eqn.pt2}
p\left(\frac{\pt R_i}{\pt \z_j^\square} \right)
= \sum_\Delta p \left( d(\hat \Delta_{i},\Delta) \right) \in \BZ[\pi]
\ee
where the sum is taken over all tetrahedra $\Delta$ of $\ti \calT$ contributing
$z_j^\square$ to $\ti e_i$. Since 
\be
p\left(d(\ti \Delta_1, \Delta)\right)
= p\left(d(\ti \Delta_1, \hat \Delta_{i}) \right) \,
p\left( d(\hat \Delta_{i}, \Delta)\right)
\ee 
for any $\Delta$, we deduce from~\eqref{eqn.pt1} and~\eqref{eqn.pt2} that the
matrix~\eqref{eqn.matrix} agrees with $\mathbf{G}^\square$ up to left
multiplication by a diagonal matrix with entries in $\pi$.
\end{proof}

\subsection{Curves in triangulations}
\label{sub.curves}

The 1-skeleton $\calD^{(1)}$ of the dual complex $\calD$ intersects with a 
tetrahedron in four points. Hence there are three ways of smoothing it in each 
tetrahedron as in Figure~\ref{fig.smoothing_abc}.  Each smoothing makes
two curves in a tetrahedron winding two edges with the same shape parameter. 
We thus refer to it as $Z$, $Z'$, or $Z''$-smoothing accordingly. 
Applying $Z$-smoothing to $\calD^{(1)}$ for all tetrahedra, we obtain finitely
many loops, which we call $Z$-curves of $\calT$.
We define $Z'$ and $Z''$-curves of $\calT$ similarly.

\begin{figure}[htpb!]
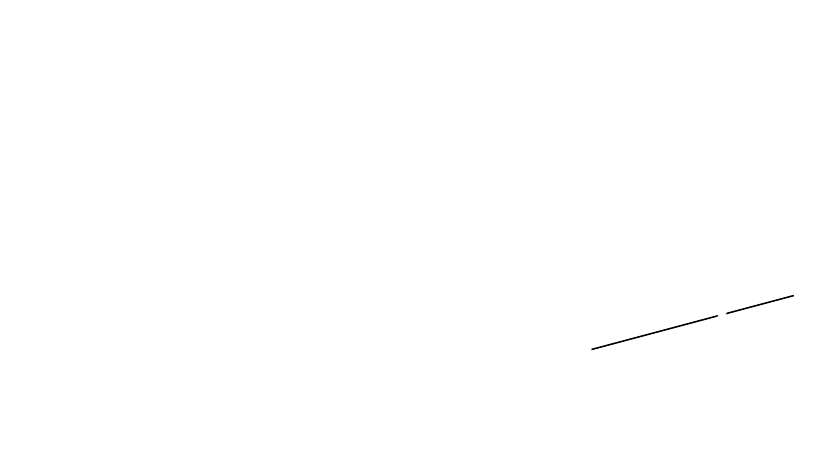
\caption{Three ways of smoothing $\calD^{(1)}$.}
\label{fig.smoothing_abc}
\end{figure}
 
\begin{proposition}
\label{prop.Bloop}
If $\calT$ is ordered, the $Z$-curves homotope to disjoint peripheral curves.
\end{proposition}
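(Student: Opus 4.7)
The plan is to construct an explicit homotopy pushing the $Z$-curves into small horospherical neighborhoods of the cusps of $\partial M$. In each ordered tetrahedron $\Delta_j$, the $Z$-smoothing of $\calD^{(1)}$ produces two disjoint arcs: one winding around the $z$-edge $(01)$ (and exiting through faces $(012)$ and $(013)$), and one winding around the $z$-edge $(23)$ (exiting through faces $(023)$ and $(123)$). I would push each such arc to the cusp at the \emph{middle vertex} of its two exit faces, where by the middle vertex of a face $(abc)$ with $a<b<c$ I mean the vertex labeled $b$.

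This rule is well-posed because the two faces $(012)$ and $(013)$ adjacent to $(01)$ share middle vertex $1$, while $(023)$ and $(123)$ share middle vertex $2$; a quick case check further shows that in all four cases the middle vertex actually lies on the $z$-edge of the face, so each arc really can be homotoped toward an endpoint of the edge it winds around. The orderedness of $\calT$ now enters decisively: every face identification is order-preserving, and therefore sends the middle vertex of one face to the middle vertex of the other. Consequently, when two tetrahedra are glued along a face, the cusps at their middle vertices are identified, and the pushed arcs coming from the two sides meet at the same edge of the cusp triangulation. Concatenating along a component of the $Z$-curves shows that its pushed image lies on a single boundary torus, hence is peripheral.

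For disjointness, note that inside each cusp triangle the $Z$-smoothing contributes at most one pushed arc: this arc exists precisely when the vertex is labeled $1$ or $2$ in its tetrahedron, in which case it is the short segment cutting off the corner with angle $z_j$. Distinct components of the $Z$-curves are built from disjoint arcs of the tetrahedral smoothings, so their pushed images occupy disjoint cusp triangles (or disjoint regions of the same triangle) and therefore yield pairwise disjoint simple closed curves on $\partial M$. The main obstacle is essentially combinatorial: isolating the middle-vertex rule and verifying that it propagates correctly across each of the four possible order-preserving face identifications $(012), (013), (023), (123) \mapsto (abc)$. Once that small case analysis is in hand, both the peripherality and the disjointness follow by a routine gluing argument.
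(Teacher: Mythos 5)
Your proof is correct and takes essentially the same approach as the paper: the paper likewise pushes the intersection point of the $Z$-curves with each face toward that face's middle vertex, using that order-preserving face pairings match middle vertices, so that the curves become small peripheral loops near the vertices labeled $1$ and $2$ of each tetrahedron. Your explicit check that the middle vertex of each exit face lies on the relevant $z$-edge is a welcome elaboration of a step the paper leaves implicit.
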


\begin{proof}
For ordered $\calT$,  each face of $\calT$ has a ``middle'' vertex, the one whose
label is neither greatest nor smallest among the three vertices of the face. Recall
that the $Z$-curves intersect with each face $f$ of $\calT$ in a point. We  push the
intersection point toward the middle vertex of $f$. Doing so for all faces of $\calT$,
the $Z$-curves homotope to disjoint peripheral curves. Note that then the
$Z$-curves make two small curves in each tetrahedron lying in a neighborhood of
the vertices 1 and 2 as in Figure~\ref{fig.push}.
\begin{figure}[htpb!]
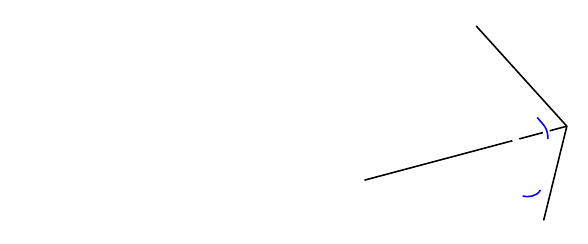
\caption{Homotope $Z$-curves to peripheral curves.}
\label{fig.push}
\end{figure}
\end{proof}

We now fix a tetrahedron $\Delta_j$ of $\calT$. Recall that the free group
$\calF_\calD$ has $2N$ generators, say $g_1,\ldots, g_{2N}$, and 
that a face $f$ of $\Delta_j$ corresponds to one generator $g_i$,
oriented either inward or outward to $\Delta_j$. 
We define a column vector  $v_f \in \BZ[\pi]^{2N}$
\be
\label{eqn.v}
v_f = 
\begin{cases}
p(g_i)\, e_i & \quad \textrm{if $g_i$ is inward to $\Delta_j$} \\
-e_i &  \quad  \textrm{if $g_i$ is outward to $\Delta_j$}
\end{cases}
\ee
where $(e_1,\ldots,e_{2N})$ is the standard basis of $\BZ^{2N}$. 
We say that two faces of $\Delta_j$ are \emph{$Z$-adjacent} if they are joined by
one of two curves in $\Delta_j$ obtained from $Z$-smoothing
(see~Figure~\ref{fig.smoothing_abc}). 
Note that $\Delta_j$ has  two pairs of $Z$-adjacent faces.

\begin{proposition}
\label{prop.vec}
If $\calT$ is ordered, the column vector
\be
\label{eqn.vec}
\begin{pmatrix}
p\left(\dfrac{\pt r_1}{\pt g_{1}}\right) & \cdots  &
p\left(\dfrac{\pt r_1}{\pt g_{2N}}\right) \\
\vdots & & \vdots \\
p\left(\dfrac{\pt r_N}{\pt g_1}\right) & \cdots  &
p\left(\dfrac{\pt r_N}{\pt g_{2N}}\right)
\end{pmatrix} (v_{f_0}+v_{f_1}) \in \BZ[\pi]^N
\ee
is equal to the $j$-th column of
\be
\label{eqn.vecb}
\begin{pmatrix}
p\left(\dfrac{\pt R_1}{\pt \z''_{1}}\right) & \cdots  &
p\left(\dfrac{\pt R_1}{\pt \z''_{N}}\right) \\
\vdots & & \vdots \\
p\left(\dfrac{\pt R_N}{\pt \z''_1}\right) & \cdots  &
p\left(\dfrac{\pt R_N}{\pt \z''_{N}}\right) 
\end{pmatrix}
-  \begin{pmatrix}
p\left(\dfrac{\pt R_1}{\pt \z'_{1}}\right) & \cdots  &
p\left(\dfrac{\pt R_1}{\pt \z'_{N}}\right) \\
\vdots & & \vdots \\
p\left(\dfrac{\pt R_N}{\pt \z'_1}\right) & \cdots  &
p\left(\dfrac{\pt R_N}{\pt \z'_{N}}\right)
\end{pmatrix} 
\ee
up to sign where $f_0$ and $f_1$ are $Z$-adjacent faces of $\Delta_j$.
\end{proposition}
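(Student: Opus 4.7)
My plan is to expand both sides of the claimed equality using the definition of $v_f$ and of the Fox derivatives in the Jacobian matrix from~\eqref{eqn.vec}, and then to match each resulting term against a single pass through $\Delta_j$ occurring when winding the lift of $r_i$ once around the fixed lift of $e_i$ in $\ti\calT$. Throughout, write $r_i = g_{k_1}^{\e_1}\cdots g_{k_n}^{\e_n}$ and set $u_T := p(g_{k_1}^{\e_1}\cdots g_{k_T}^{\e_T})$; call ``pass at step $T$'' the event that the $T$-th tetrahedron encountered in this winding lifts $\Delta_j$, with entry face $f^{(T)}$, exit face $f^{(T+1)}$, and winding shape parameter $\hat z_{(T)}\in\{\hat z_j,\hat z_j',\hat z_j''\}$.

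\emph{Step 1.} Following the proof of Proposition~\ref{prop.fox} and using $p(\z_j^\square)=1$, the $i$-th entry of the $j$-th column of~\eqref{eqn.vecb} equals
\be
\sum_{T:\,\hat z_{(T)}=\hat z_j''} u_T \;-\; \sum_{T:\,\hat z_{(T)}=\hat z_j'} u_T,
\ee
where the two sums range over passes through $\Delta_j$ in $r_i$ with winding shape parameter $\hat z_j''$ and $\hat z_j'$, respectively.

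\emph{Step 2.} For a single face $f$ of $\Delta_j$ with associated dual edge $g_a$, a short case analysis on whether $g_a$ is inward or outward and on the sign of the exponent of $g_a$ in $r_i$ shows that each occurrence of $g_a^{\pm1}$ in $r_i$ contributes $+u_T$ to $\bigl[(\text{matrix})\cdot v_f\bigr]_i$ when it is the entry letter of the pass at step $T$ (that is, $f^{(T)}=f$) and $-u_T$ when it is the exit letter (that is, $f^{(T+1)}=f$); the sign does not depend on whether $g_a$ was chosen inward or outward. Summing over $f\in\{f_0, f_1\}$ gives
\be
\bigl[(\text{matrix})\cdot(v_{f_0}+v_{f_1})\bigr]_i \;=\; \sum_T \Bigl(\mathbbm{1}_{f^{(T)}\in\{f_0,f_1\}}-\mathbbm{1}_{f^{(T+1)}\in\{f_0,f_1\}}\Bigr)\, u_T .
\ee

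\emph{Step 3.} Stratify this sum by $\hat z_{(T)}$. When $\hat z_{(T)} = \hat z_j$, the pair $\{f^{(T)},f^{(T+1)}\}$ is a $Z$-adjacent pair of $\Delta_j$, hence either equals $\{f_0,f_1\}$ (both indicators $=1$) or is disjoint from it (both $=0$); the summand vanishes. When $\hat z_{(T)}\in\{\hat z_j',\hat z_j''\}$, the pair shares exactly one face with $\{f_0,f_1\}$ and contributes $\pm u_T$, the sign determined by whether the shared face is the entry or the exit of the pass. The ordered hypothesis is essential here: the vertex order fixes the cyclic order of the two faces around each edge of $\Delta_j$ (via the sign of the permutation of $\{0,1,2,3\}$ that lists the two on-edge vertices first), and a direct enumeration over the six edges confirms that, once $\{f_0,f_1\}$ is fixed, the sign is uniformly $+$ on every $z_j''$-pass and uniformly $-$ on every $z_j'$-pass, or the opposite pair of signs if the complementary $Z$-pair is chosen. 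This yields $\pm$ times the expression of Step~1, proving the proposition.

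The main obstacle is the sign check in Step~3: one must verify by a finite enumeration over the six edges of an ordered tetrahedron that the vertex ordering makes all $z_j''$-passes (and all $z_j'$-passes) contribute with a common sign. Without the ordered hypothesis the cyclic orders of faces around edges are not compatible across face-pairings and the individual $\pm u_T$ contributions fail to align, which is exactly why the statement is restricted to ordered triangulations.
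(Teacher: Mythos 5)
Your Steps 1 and 2 are a correct and careful rendering of the same bookkeeping the paper does: the Fox derivatives with respect to $\z_j'$ and $\z_j''$ collect the prefixes $u_T$ of the $z_j'$- and $z_j''$-passes, while $(\text{matrix})\cdot(v_{f_0}+v_{f_1})$ collects $+u_T$ or $-u_T$ according to whether a face of $\{f_0,f_1\}$ is the entry or exit face of the adjacent pass through $\Delta_j$ (your four-case check of the $\pm$ and the cancellation of the two $z_j$-corners are both right, and the wrap-around pass is handled correctly because $p(r_i)=1$). The difficulty is that Step 3 stops exactly where the proposition's content begins. The entire statement reduces to the claim that, over the four corners of $\Delta_j$ at its $z_j'$- and $z_j''$-edges, the face shared with $\{f_0,f_1\}$ is uniformly the entry face for one primed parameter and uniformly the exit face for the other; you assert that ``a direct enumeration over the six edges confirms'' this and then, in your closing paragraph, concede it is the main unverified obstacle. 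That enumeration \emph{is} the proof --- it is precisely what the paper's display~\eqref{eqn.appear2} carries out, by listing the five local patterns $\cdots g_{i_1}\z_j g_{i_0}\cdots$, $\cdots\z_j''g_{i_0}\cdots$, $\cdots g_{i_0}^{-1}\z_j'\cdots$, $\cdots g_{i_1}\z_j'\cdots$, $\cdots\z_j''g_{i_1}^{-1}\cdots$ in which $g_{i_0},g_{i_1}$ can occur --- so omitting it leaves the argument without its decisive step.

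Moreover, the mechanism you sketch for the check (``the sign of the permutation of $\{0,1,2,3\}$ that lists the two on-edge vertices first'') does not by itself determine which of the two faces at a corner is the entry face. That depends on the direction in which the dual curve winds around the ambient edge $e_i$ of $\calT$, i.e.\ on the chosen direction of the relator $r_i$, and the two $z_j''$-corners of $\Delta_j$ generally sit in windings around \emph{different} edges $e_i$, whose directions are a priori independent choices. To make the local enumeration legitimate you must either fix these directions canonically (using the orientation of $M$ together with the edge orientations that the ordering induces, since order-preserving face pairings identify tetrahedron edges preserving the increasing orientation), or observe that reversing $r_i$ multiplies the $i$-th row of both~\eqref{eqn.vec} and~\eqref{eqn.vecb} by $-p(r_i)^{-1}=-1$, so the identity is insensitive to these choices and one may verify it for a convenient normalization. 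What orderedness actually supplies --- and what the paper uses --- is that the coorientations of the four faces of $\Delta_j$ (inward versus outward dual edge) are determined by the vertex order, forcing the one-in/one-out structure at the $z_j$- and $z_j''$-corners and the exhaustive pattern list above. Without spelling out this step, the proof is incomplete at its only nontrivial point.
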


\begin{proof} 
Two faces of $\Delta_j$ are $Z$-adjacent if and only if they are adjacent to either 
the edge $(01)$ or $(23)$. We first consider two faces adjacent to the edge $(01)$.
One of the two faces is oriented inward to $\Delta_j$, and the other is oriented
outward. Let $f_0$ and $f_1$ be the former and the latter, respectively, as in
Figure~\ref{fig.smoothing_b}. Note that the orientation of every edge of $f_0$ and
$f_1$ is determined, regardless of the vertices 2 and 3 of $\Delta_j$.
\begin{figure}[htpb!]
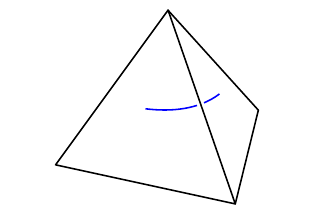
\caption{Two generators joined by B-smoothing.}
\label{fig.smoothing_b}
\end{figure}

From the edges of $f_0$ and $f_1$, we deduce that the generators $g_{i_0}$ and
$g_{i_1}$ corresponding to $f_0$ and $f_1$ respectively appear in the words
$R_1,\ldots,R_N$ as follows. 
\be
\label{eqn.appear2}
\begin{array}{l}
\cdots g_{i_1}  \z_j  \, g_{i_0} \cdots \\[3pt]
\cdots  \z_j '' \, g_{i_0}  \cdots \\[3pt]
\cdots   g_{i_0}^{-1}\,\z_j' \cdots \\[3pt]
\cdots g_{i_1}\,  \z_j '  \cdots  \\[3pt]
\cdots  \z_j'' \,  g_{i_1}^{-1} \cdots 
\end{array}
\ee
We stress that $g_{i_0}$ and $g_{i_1}$ do not appear elsewhere other than listed
above, and neither do $\z'_j$ and $\z''_j$. It follows that for all $1 \leq k \leq N$ 
\be 
\label{eqn.key}
p \left( \frac{\pt r_k}{ \pt g_{i_0}}- \frac{\pt r_k}{ \pt g_{i_1}} \, g_{i_1} \right)
=  p\left(\frac{\pt R_k}{\pt \z_j''} - \frac{\pt R_k}{\pt \z_j'} \right) \,.
\ee
Writing the above equation in a matrix form, we obtain the proposition. We prove
similarly for two faces adjacent to the edge $(23)$, in which case the left-hand side
of~\eqref{eqn.key} is equal to negative of the right-hand side.
\end{proof}

\begin{remark}
\label{rmk.a}
One can deduce similar equations for $Z'$ and $Z''$-adjacent faces, but the
equations only hold modulo 2. Precisely,  for $Z'$-adjacent faces $f_0$ and
$f_1$  of $\Delta_j$, the column vector~\eqref{eqn.vec}  and the $j$-th column of
\be
\begin{pmatrix}
p\left(\dfrac{\pt R_1}{\pt \z_{1}}\right) & \cdots  &
p\left(\dfrac{\pt R_1}{\pt \z_{N}}\right) \\
\vdots & & \vdots \\
p\left(\dfrac{\pt R_N}{\pt \z_1}\right) & \cdots  &
p\left(\dfrac{\pt R_N}{\pt \z_{N}}\right)
\end{pmatrix} -
\begin{pmatrix}
p\left(\dfrac{\pt R_1}{\pt \z''_{1}}\right) & \cdots  &
p\left(\dfrac{\pt R_1}{\pt \z''_{N}}\right) \\
\vdots & & \vdots \\
p\left(\dfrac{\pt R_N}{\pt \z''_1}\right) & \cdots  &
p\left(\dfrac{\pt R_N}{\pt \z''_{N}}\right) 
\end{pmatrix}
\ee
are congruent modulo 2, i.e. they induce the same vector over $(\BZ/2\BZ)[\pi]$.
Similarly, for $Z''$-adjacent faces $f_0$ and $f_1$ of $\Delta_j$, the column
vector~\eqref{eqn.vec}  and the $j$-th column of
\be
\begin{pmatrix}
p\left(\dfrac{\pt R_1}{\pt \z_{1}}\right) & \cdots  &
p\left(\dfrac{\pt R_1}{\pt \z_{N}}\right) \\
\vdots & & \vdots \\
p\left(\dfrac{\pt R_N}{\pt \z_1}\right) & \cdots  &
p\left(\dfrac{\pt R_N}{\pt \z_{N}}\right)
\end{pmatrix} -
\begin{pmatrix}
p\left(\dfrac{\pt R_1}{\pt \z'_{1}}\right) & \cdots  &
p\left(\dfrac{\pt R_1}{\pt \z'_{N}}\right) \\
\vdots & & \vdots \\
p\left(\dfrac{\pt R_N}{\pt \z'_1}\right) & \cdots  &
p\left(\dfrac{\pt R_N}{\pt \z'_{N}}\right) 
\end{pmatrix}
\ee
are congruent modulo 2
\end{remark}

\subsection{Determinants of NZ matrices and the (twisted) Alexander
polynomial}
\label{sub.thm13}

Combining Propositions~\ref{prop.fox}--\ref{prop.vec}, we obtain   
Theorems~\ref{thm.1} and ~\ref{thm.2}. We present details here.
 
\begin{proof}[Proof of Theorems~\ref{thm.1} and~\ref{thm.2}.]
Let $\calD$ be the dual cell complex of $\calT$ and consider the cellular chain 
complex of $\calD$ with local coefficient $\BZ[t^{\pm1}]$ twisted by $\alpha : \pi
\rightarrow \BZ \simeq t^\BZ$:
\be
\label{eqn.chain}
0 \longrightarrow C_2(\calD; \BZ[t^{\pm1}]_\alpha)
\overset{\partial_2}{\longrightarrow}
C_1(\calD; \BZ[t^{\pm1}]_\alpha)  \overset{\partial_1}{\longrightarrow}
C_0(\calD; \BZ[t^{\pm1}]_\alpha) \longrightarrow 0 \,.
\ee
Here $C_i(\calD ; \BZ[t^{\pm1}]_\alpha)  := C_i(\ti \calD ; \BZ) \otimes _{\BZ[\pi]}
\BZ[t^{\pm1}]$, where $\ti \calD$ is the universal cover of $\calD$, is a free
$\BZ[t^{\pm1}]$-module of rank $N$ 
for $i=0,2$ and of rank $2N$ for $i=1$. 

We choose a spanning tree of $\calD$, hence $N-1$ edges of $\calD$. Lifting the
tree to $\ti \calD$, we  obtain a basis of $C_i(\calD;\BZ[t^{\pm1}]_\alpha)$.
It is well-known that the boundary map $\pt_2$ in~\eqref{eqn.chain} is given
by the Fox derivative
\be
\pt_2=
\begin{pmatrix}
\alpha(p(\frac{\pt r_1}{\pt g_{1}})) & \cdots  & \alpha(p(\frac{\pt r_1}{\pt g_{2N}}))
\\
\vdots & & \vdots
\\
\alpha(p(\frac{\pt r_N}{\pt g_1})) & \cdots  & \alpha(p(\frac{\pt r_N}{\pt g_{2N}}))
\\
\end{pmatrix}^T \in M_{2N \times N}(\BZ[t^{\pm1}]) 
\ee
where $p$ is the map eliminating all generators in the tree. Also, the boundary map
$\pt_1$ can be expressed in terms of the vector described in~\eqref{eqn.v}.
Precisely, the $j$-th row of $\pt_1$ is  
\be
\alpha(v_{f_0}^T)+\cdots + \alpha(v_{f_3}^T) \in \BZ[t^{\pm1}]^{2N}
\ee
where $f_0,\ldots, f_3$ are the faces of $\Delta_j$. Recall that $v_f$ is a column
vector, hence its transpose $v_f^T$ is a row vector.
Since $Z$-smoothing couples the faces $f_0,\ldots,f_3$ of $\Delta_j$ into pairs,
we can decompose $\pt_1$ into
\be
\pt_1 = \pt_{1,B} +(\pt_1-\pt_{1,B})
\ee
where the $j$-th rows of both $\pt_{1,B}$ and $\pt_1 - \pt_{1,B}$ are of the form 
$\alpha(v_{f}^T)+\alpha(v_{f'}^T)$ for $Z$-adjacent faces $f$ and $f'$ of $\Delta_j$.
Then Propositions~\ref{prop.fox} and \ref{prop.vec} imply that
\be 
\label{eqn.rephrase}
\pt_2^T \, \pt_{1,B}^T = D \mathbf{B}_\alpha(t) 
\ee
where $D$ is a diagonal matrix with entries in $\{ \pm t^k \, | \, k \in \BZ \}$.
It follows that for any $N$-tuple $b=(b_1,\ldots,b_N)$ of column vectors in
$C_1(\calD;\BZ[t^{\pm1}]_\alpha)$, we have
\be
\begin{pmatrix}
\ \pt_2 ^T\ \\
\hline
\  b^T \ 
\end{pmatrix} 
\begin{pmatrix}
\, \pt_{1,B}^T  &  \rvline & \pt_1^T \,
\end{pmatrix} 
=
\begin{pmatrix}
D \mathbf{B}_\alpha(t) & \rvline & 0 \\
\hline
\pt_{1,B}(b)^T 	& \rvline & \pt_1(b)^T 
\end{pmatrix}
\ee
and thus 
\be
\label{eqn.comp}
\frac{\det \begin{pmatrix} 
\, \pt_2 & \rvline &  b \,
\end{pmatrix}}
{\det \pt_1(b)} \det\begin{pmatrix}
\, \pt_{1,B}\, \\
\hline
\pt_1 
\end{pmatrix} 
\doteq \det \mathbf{B}_\alpha(t)
\ee
provided that $\det \pt_1(b) \neq 0$.

The first term of the left-hand side of~\eqref{eqn.comp} is by definition
$\Delta_\alpha(t)/(t-1)$ where $\Delta_\alpha(t)$ is the Alexander polynomial
associated with $\alpha$. The second term obviously satisfies 
\be
\label{eqn.fin}
\det\begin{pmatrix}
\, \pt_{1,B}\, \\
\hline
\pt_1 
\end{pmatrix} = \det\begin{pmatrix}
\, \pt_{1,B}\, \\
\hline
\pt_1 - \pt_{1,B}
\end{pmatrix} \, .
\ee
Recall that each row of $\pt_{1,B}$ and $\pt_1 - \pt_{1,B}$ is of the form 
$v_{f}^T+v_{f'}^T$ for some faces $f$ and $f'$ and that each column of $\pt_1$
has at most two non-trivial entries. It follows that each row and column of the
matrix in the right-hand side of~\eqref{eqn.fin} has at most two non-trivial entries.
Such a matrix after changing some rows and columns can be expressed as a direct sum 
of matrices of the form 
\be
\label{eqn.shift}
\begin{pmatrix}
x_1 &  -y_1&  &  \\
& x_2 & -y_2 &    \\
& & \ddots & \ddots  \\
&  &  & x_{n-1} & -y_{n-1} \\
-y_n &  &  & & x_n
\end{pmatrix} 
\ee
whose determinant is $x_1 \cdots x_n - y_1 \cdots y_n$. In our case, expressing 
the matrix in the right-hand side of~\eqref{eqn.fin} as in the form~\eqref{eqn.shift}
is carried out by following the $Z$-curves. In particular, all $x_i$ are of the form
$t^{\alpha(g_i)}$ and all $y_i$ are $1$. It follows that the right-hand side
of~\eqref{eqn.fin} equals to $\prod (t^{\alpha(Z_i)}-1)$ where the product is
over all components $Z_i$ of the $Z$-curves. Therefore, we obtain
\be
\label{eqn.B2}
\det \mathbf{B}_\alpha(t) \doteq \frac{\Delta_\alpha(t)}{t-1} \, \prod_i
(t^{\alpha(Z_i)} - 1) \, .
\ee
On the other hand, Proposition~\ref{prop.Bloop} says that the $Z$-curves
homotope to disjoint peripheral curves. If one component is homotopically trivial,
we have $\det \mathbf{B}_\alpha(t) =0$ from Equation~\eqref{eqn.B2}.
Otherwise, the $Z$-curves are $m$-parallel copies of a peripheral curve $\gamma$
for $m \geq 1$, hence Equation~\eqref{eqn.Bsim} holds for $n=\alpha(\gamma)$.
This completes the proof of Theorem~\ref{thm.1}.

We obtain Theorem~\ref{thm.2} by simply replacing $\alpha$ in the above proof 
of Theorem~\ref{thm.1}
by $\alpha \otimes \rho$. We omit details, as this is indeed a repetition with only
obvious variants. For instance, the coefficient of the chain complex
\eqref{eqn.chain} is replaced by 
$(\BZ[t^{\pm1}] \otimes \BC^n)_{\alpha \otimes \rho}$,  the
matrix~\eqref{eqn.shift} should be viewed as a block matrix, and 
Equation~\eqref{eqn.B2} is replaced by
\be
\label{eqn.B4}
\det \mathbf{B}_{\alpha \otimes \rho} (t) \doteq \Delta_{\alpha \otimes \rho}(t) \,
\prod_{i} \det(\rho(Z_i) \, t^{\alpha(Z_i)} - I_n) 
\ee
where $I_n$ is the identity matrix of rank $n$.
\end{proof}

\begin{remark}
\label{rmk.A}
Applying the same argument as in the proof of Theorem~\ref{thm.1}, 
we deduce equations in $(\BZ/2\BZ)[t^{\pm1}]$ from Remark~\ref{rmk.a},
analogous to Equation~\eqref{eqn.B2}:
\begin{align}
\det \mathbf{A}_\alpha(t)&\equiv \frac{\Delta_\alpha(t)}{t-1} \, 
\prod_{i} (t^{\alpha(Z''_i)} - 1)  & (\textrm{mod } 2)\, , \\
\det ( \mathbf{A}_\alpha(t)-\mathbf{B}_\alpha(t))&\equiv \frac{\Delta_\alpha(t)}{t-1} \, 
\prod_{i} (t^{\alpha(Z'_i)} - 1)  & (\textrm{mod } 2) \,.
\end{align}
Here $Z'_i$ and $Z''_i$ are the $Z'$ and $Z''$-curves of $\calT$, respectively.
These equations usually fail in $\BZ[t^{\pm1}]$; see Section~\ref{sec.example} for
an example.
\end{remark}

\begin{remark}
\label{rmk.palin}
The palindromicity of $\det \mathbf{B}_\alpha(t)$ follows from the palindromicity
of the Alexander polynomial $\Delta_\alpha(t)$~ \cite{Milnor} together
with Theorem~\ref{thm.1}. Here we call a Laurent polynomial $p(t)$ palindromic 
if $p(t) \doteq p(t^{-1})$.
Equation~\eqref{ABt} specialized to~\eqref{ABa} implies
that if $\mathbf{B}_\alpha(t)$ is non-singular, then 
$\mathbf{B}_\alpha(t)^{-1} \mathbf{A}_\alpha(t)$ is invariant under the transpose
followed the involution $t \mapsto t^{-1}$. Hence $\det \mathbf{B}_\alpha(t)^{-1}
\mathbf{A}_\alpha(t)$ is palindromic, and so is $\det \mathbf{A}_\alpha(t)$.	
\end{remark}
\subsection{FK determinants of NZ matrices and the $L^2$-Alexander
	torsion}
\label{sub.thm3}

Imitating the proof of Theorem~\ref{thm.1} with the Fuglede-Kadison determinant,
we obtain Theorem~\ref{thm.3}. We present details here.

\begin{proof}[Proof of Theorem~\ref{thm.3}.]
Let $\calD$ be the dual cell complex of $\calT$. The universal cover  $\ti \calD$
of $\calD$ has the cellular chain complex of left $\BZ[\pi]$-modules
\be
0 \longrightarrow C_2(\widetilde{\calD};\BZ)
\overset{\partial_2}{\longrightarrow}
C_1(\widetilde{\calD};\BZ)  \overset{\partial_1}{\longrightarrow}
C_0(\widetilde{\calD};\BZ) \longrightarrow 0 
\ee
where $C_i:=C_i(\widetilde{\calD};\BZ)$  has rank $N$ for $i=0,2$ and rank
$2N$ for $i=1$.  The boundary maps $\pt_i : C_i \rightarrow C_{i-1}$ act on the
right, i.e., we have
\be
\partial_2 \in M_{N,2N}(\BZ[\pi]), \quad \partial_1 \in M_{N,2N}(\BZ[\pi]) \, .	
\ee
As in the proof of Theorem~\ref{thm.1}, we decompose $\partial_1$ as 
$\pt_1 = \pt_{1,B} +(\pt_1-\pt_{1,B})$
where the $j$-th columns of both $\pt_{1,B}$ and $\pt_1 - \pt_{1,B}$ are of the form 
$v_{f}+ v_{f'}$ for $Z$-adjacent faces $f$ and $f'$ of $\Delta_j$.
Then Propositions~\ref{prop.fox} and \ref{prop.vec} imply that
\be 
\label{eqn.key2}
 \pt_2 \, \pt_{1,B}  =  D \mathbf{B}
\ee
where $D$ is a diagonal matrix with entries in $\pm \pi$.
 
We now fix $t \in \BR^{+}$ and twist the coefficient of $C_i$ by using the
homomorphism $\alpha_t$, i.e. consider the chain complex
$C'_i := \BR[\pi] \otimes_{\BZ[\pi]} C_i$ where $\BR[\pi]$ is viewed as a
$\BZ[\pi]$-module using the homomorphism $\alpha_t$.
Note that the boundary maps of $C'_i$ are given by $\pt'_i = \alpha_t(\pt_i)$.
It follows from Equation~\eqref{eqn.key2} that for
any $N$-tuple $b=(b_1,\ldots,b_N)$ of (row) vectors, we have
\be
\begin{pmatrix}
\ \pt'_2\ \\
\hline
\  b  \ 
\end{pmatrix} 
\begin{pmatrix}
\, \pt'_{1,B}   &  \rvline & \pt'_1 \,
\end{pmatrix} 
=
\begin{pmatrix}
\alpha_t(D \mathbf{B}) & \rvline & 0 \\
\hline
\pt'_{1,B}(b) 	& \rvline & \pt'_1(b)
\end{pmatrix}
\ee
where $\pt'_{1,B} = \alpha_t(\pt_{1,B})$. Therefore, we obtain
\be
\label{eqn.comp2}
\frac{\rdet \begin{pmatrix}
\ \pt'_2\ \\
\hline
\  b  \ 
\end{pmatrix}} {\rdet( \pt_1(b))}\, 
\rdet\begin{pmatrix}
\, \pt'_{1,B}   &  \rvline & \pt'_1 \,
\end{pmatrix}  
=t^k \,  \rdet (\alpha_t(\mathbf{B}))
\ee
for fixed $k \in \BZ$,
provided that $\rdet (\pt_1(b)) \neq 0$. The first term of the left-hand side
of~\eqref{eqn.comp2} is $\tau^{(2)}(M,\alpha)(t)$ (see \cite[Lemma~3.1]{DFL15}), and
the second term satisfies
\be
\label{eqn.dd}
\rdet\begin{pmatrix}
\, \pt'_{1,B}   &  \rvline & \pt'_1 \,
\end{pmatrix}  
=
\rdet\begin{pmatrix}
\, \pt'_{1,B}   &  \rvline & \pt'_1 - \pt'_{1,B} \,
\end{pmatrix}  \, .
\ee
Recall that the matrix $( \pt_{1,B} \, | \,  \pt_1 - \pt_{1,B} )$ after changing some
rows and columns is the direct sum of matrices of the form~\eqref{eqn.shift} with
$x_i \in \pi$ and $y_i =1$. Such matrices decompose into
\begin{equation*}
\begin{pmatrix}
x_1 &  &  &  \\
& x_2 & &    \\
& & \ddots &   \\
&  &  & x_{n-1} & \\
&  &  & & x_n
\end{pmatrix} 
\begin{pmatrix}
1 &  -x_1^{-1} &  &  \\
& 1 & - x_2^{-1}&    \\
& & \ddots & \ddots  \\
&  &  & 1 & - x_{n-1}^{-1}\\
&  &  & & 1
\end{pmatrix} 
\begin{pmatrix}
1 - x_1^{-1} \cdots x_n^{-1}&  &  &  \\
- x_2^{-1} \cdots x_n^{-1}& 1 &  &    \\
\vdots & & \ddots &  \\
-x_{n-1}^{-1}x_n^{-1} &  &  & 1 & \\
- x_n^{-1} &  &  & & 1
\end{pmatrix},
\end{equation*}
hence we deduce that
\be
\rdet\begin{pmatrix}
\, \pt'_{1,B}   &  \rvline & \pt'_1 - \pt'_{1,B} \,
\end{pmatrix} =\prod_i \rdet (1-\alpha_t(Z_i^{-1}))
=\prod_i \rdet (1- t^{-\alpha(Z_i)} Z_i^{-1}) 
\ee  
where the products are over all the components $Z_i$ of the $Z$-curves of $\calT$.
Since we assumed that each component $Z_i$ has infinite order
in $\pi$, $\rdet(1 - t^{-\alpha(Z_i)} Z_i^{-1})$ is the Mahler measure of
$Z_i - t^{-\alpha(Z_i)}$, viewed as a polynoimal in $Z_i$, which equals to
$\mathrm{max} \{1, t^{-\alpha(Z_i)}\}$. It follows that
\be
\label{eqn.ll}
 \rdet (\alpha_t(\mathbf{B}))
=t^{-k} \, \tau^{(2)} (M,\alpha)(t) \, \prod_i \mathrm{max} \{1, t^{-\alpha(Z_i)}\} 
\ee
for fixed $k \in \BZ$. Since each component $Z_i$ is of infinite order and,
in particular, non-trivial, Proposition~\ref{prop.Bloop} implies that all
$\alpha(Z_i)$ should be the same up to sign. Thus Equation~\eqref{eqn.ll}
 implies Theorem~\ref{thm.3}.
\end{proof}

\section{Example}
\label{sec.example}

As is customary in hyperbolic geometry, in this section we give an example
of a cusped hyperbolic 3-manifold $M$, the complement of the knot $4_1$ in $S^3$.
The default \texttt{SnapPy} triangulation $\calT$ of $M$ consists of two ideal
tetrahedra $\Delta_1$ and $\Delta_2$, and is orderable with the ordering
shown in Figure~\ref{fig.Example41}~\cite{snappy}. It has two edges $e_1$ and $e_2$; 
$(01), (03), (23)$ of $\Delta_1$ and $(02), (12), (13)$ of $\Delta_2$ are identified
with $e_1$; $(02), (12), (13)$ of $\Delta_1$ and $(01),(03),(23)$ of $\Delta_2$ are
identified swith $e_2$.

\begin{figure}[htpb!]
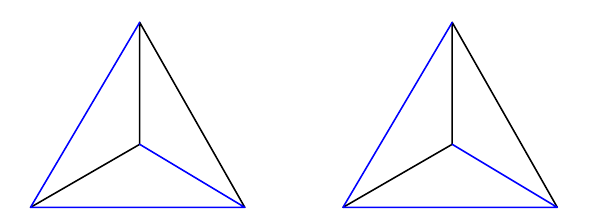
\caption{An ordered ideal triangulation of $4_1$.}
\label{fig.Example41}
\end{figure}


The dual cell complex of $\calT$ has $4$ edges and $2$ faces, hence we have two
words $r_1$ and $r_2$ in four generators $g_1,\ldots, g_4$
Note that  $g_1$ and $g_4$ (resp., $g_2$ and $g_3$) are oriented inward to $\Delta_1$
(resp., $\Delta_2$) and that the words $r_1$ and $r_2$ are obtained from winding
around the edges of $\calT$:
\be
\label{eqn.41}
\begin{aligned}
e_1 & : \quad
g_1 \overset{z_1}{\longrightarrow}  g_3
\overset{z''_2}{\longrightarrow} g_4
\overset{z_1}{\longrightarrow} g_2
\overset{z'_2}{\longrightarrow}
g_3^{-1} \overset{z'_1}{\longrightarrow}
g_4^{-1} \overset{z'_2}{\longrightarrow} g_1 \, ,\\
e_2 & : \quad
g_1 \overset{z_1'}{\longrightarrow}  g_2
\overset{z_2}{\longrightarrow} g_4
\overset{z_1''}{\longrightarrow} g_1^{-1}
\overset{z''_2}{\longrightarrow} g_2^{-1}
\overset{z''_1}{\longrightarrow} g_3 \overset{z_2}{\longrightarrow} g_1 \, .
\end{aligned}
\ee
Precisely, $r_1= g_3 g_4 g_2 g_3^{-1} g_4^{-1} g_1$ and
$r_2= g_2 g_4 g_1^{-1} g_2^{-1} g_3 g_1$.
Eliminating one generator, say $g_1$, we obtain a presentation of
$\pi=\pi_1(M)$:
\be
\pi = \langle g_{2}, g_3, g_4 \, | \, g_3 g_4 g_2 g_3^{-1} g_4^{-1} , 
\ g_2 g_4  g_2^{-1} g_3 \rangle \,.
\ee 
Note that $g_4$ is a meridian of the knot.

As in Section~\ref{sec.fox}, we define a word $R_i$ for
$i=1,2$ by inserting shape parameters to the word $r_i$ (c.f. \eqref{eqn.41}):
\begin{align*}
R_1 &=\z_1\, g_3\, \z''_2 \, g_4 \, \z_1 \, g_2 \, \z'_2 \, g_3^{-1}
\, \z'_1 \, g_4^{-1} \, \z'_2 \, g_1  \, ,\\
R_2&= \z'_1 \, g_2 \, \z_2 \, g_4 \, \z''_1 \, g_1^{-1}
\, \z''_2 \, g_2^{-1} \, \z''_1 \, g_3 \, \z_2 \, g_1 \, .
\end{align*}
Due to Proposition~\ref{prop.fox}, the twisted gluing equation matrices
$\mathbf{G}^\square$ of $\calT$ are equal to 
$(\pt R_i / \pt \z_j^\square)$ followed by eliminating $g_1$ and all $\z_j^\square$.
Explicitly, we have
\begin{align*}
\mathbf{G}
& = \begin{pmatrix}
1 + g_3 g_4 &  0  \\
0 & g_2 + g_2 g_4 g_2^{-1} g_3
\end{pmatrix}, \\
\mathbf{G}'
& = \begin{pmatrix}
g_3 g_4 g_2 g_3^{-1}& g_3 g_4 g_2 + g_3 g_4 g_2 g_3^{-1} g_4^{-1}  \\
1 & 0
\end{pmatrix}, \\
\mathbf{G}''
&= \begin{pmatrix}
0 & g_3 \\
g_2 g_4 + g_2 g_4  g_2^{-1}  & g_2 g_4
\end{pmatrix}   
\end{align*}
and thus the twisted Neumann--Zagier matrices of $\calT$ are given as
\begin{align}
\mathbf{A}
& = \begin{pmatrix}
1 + g_3 g_4 - g_3 g_4 g_2 g_3^{-1} & - g_3 g_4 g_2 - g_3 g_4 g_2 g_3^{-1} g_4^{-1}  \\
-1 & g_2 + g_2 g_4 g_2^{-1} g_3
\end{pmatrix}, \\
\mathbf{B}
&= \begin{pmatrix}
- g_3 g_4 g_2 g_3^{-1} &  g_3 - g_3 g_4 g_2 - g_3 g_4 g_2 g_3^{-1} g_4^{-1}  \\
g_2 g_4 + g_2 g_4 g_2^{-1}  -1 & g_2 g_4
\end{pmatrix} \, . \label{eqn.B41}
\end{align}

On the other hand, the abelianization map $\alpha:\pi\rightarrow \BZ$ is given by  
$\alpha(g_2)=0$, $\alpha(g_3)= {-1}$ and $\alpha(g_4) = {1}$. Applying $\alpha$ to
the twisted Neumann--Zagier matrices, we obtain
\be
\mathbf{A}_\alpha(t)
= \begin{pmatrix}
2- t & -2 \\
-1 &  2
\end{pmatrix}, \qquad
\mathbf{B}_\alpha(t)
= \begin{pmatrix}
- t & t^{-1} -2\\
2 t  -1 & t
\end{pmatrix} \, .
\ee
One easily computes that 
\be
\det \mathbf{B}_\alpha(t) \doteq (t-1) (t^2-3t+1)
\ee
which verifies Theorem~\ref{thm.1} as well as Equation~\eqref{eqn.B2}. Note that
the Alexander polynomial of $4_1$ is $t^2-3t+1$ and that $\calT$ has two $Z$-curves 
$Z_1 =  g_1 g_3$ and $Z_2 = g_4 g_2$  with $ \alpha(Z_1)=-1$, $\alpha(Z_2) =1$.
We also check that  
\begin{equation}
	\det \mathbf{A}_\alpha(t) \equiv \det (\mathbf{A}_\alpha(t)-\mathbf{B}_\alpha(t))
\equiv 0 \quad \textrm{ (mod 2) }
\end{equation}
which verifies Remark~\ref{rmk.A}. Note that $\calT$ has 
one $Z'$-curve  $Z'_1 = g_1 g_2 g_3^{-1} g_4^{-1}$ with $\alpha(Z'_1) = 0$ and 
one $Z''$-curve $Z''_1 = g_2^{-1} g_3 g_4 g_1^{-1}$ with $\alpha(Z''_1) = 0$.

We now compute a (positive) lift $\rho : \pi \rightarrow \SL_2(\BC)$ of the geometric
representation of $M$. Since $g_4$ is a meridian of the knot, we may let (see
\cite[Lemma 1]{Riley})
\be
\rho(g_4) = \begin{pmatrix}
1 & 1 \\ 0& 1
\end{pmatrix}\, , \quad 
\rho(g_2) = \begin{pmatrix}
n & 0 \\ u & 1/n
\end{pmatrix} \,.
\ee
A straightforward computation shows that the above assignment induces a
representation $\rho$ of $\pi$ if and only if  $u = -(1 -4 n^2 +n^4)/(3 n + 3n^3)$
and $1 - 3n + 5n^2 - 3n^3 + n^4=0$. Applying $\alpha \otimes \rho$ to 
Equation~\eqref{eqn.B41}, one computes that 
\be
\det \mathbf{B}_{\alpha \otimes \rho}(t) \doteq (t-1)^4 (t^2-4t+1)/t^2 \, .
\ee
This verifies Theorem~\ref{thm.2} as well as Equation~\eqref{eqn.B4}. Note that
the twisted Alexander polynomial of $4_1$ associated with $\alpha \otimes \rho$
is $t^2-4t+1$. 

\begin{remark}
\label{rmk.82}
For ordered ideal triangulations, $\det \mathbf{A}_\alpha(t)$ is often a multiple of 2
and thus vanishes in $(\BZ/2\BZ)[t^{\pm1}]$. One example which is not the case is the
knot $8_2$. Its default \texttt{SnapPy} triangulation is orderable, and Philip Choi's
program computes that 
\begin{align*}
\mathbf{A}_\alpha(t) &=
\left(
\begin{array}{cccccc}
t^{-4}+1 & 1 & -t^{-4} & t^{-4} & 0 & 0 \\
-t^{-2}-1 & 0 & 1 & 0 & 0 & 0 \\
0 & -1 & 0 & t & 1 & -1 \\
0 & -t^{-1} & -t & -t^{-4} & -t & t^{-5} \\
0 & 1 & 0 & 0 & 0 & t \\
0 & 0 & 1 & -1 & 0 & -1 \\
\end{array}
\right), \\
\mathbf{B}_\alpha(t)&=
\left(
\begin{array}{cccccc}
t^{-2} & 0 & -t^{-4} & 0 & 0 & 0 \\
-t^{-2}-1 & t^{-2} & 0 & 0 & 0 & 0 \\
1 & -1 & 1 & 0 & t & t-1 \\
0 & 1-t^{-1} & -t & t^{-5}-t^{-4} & -t & 0 \\
0 & 0 & t^2 & t^2 & -t & t \\
0 & 0 & 0 & -1 & 1 & -1 \\
\end{array}
\right) 
\end{align*}
with
\begin{align*}
\det \mathbf{A}_\alpha(t)& = (t-1) (t^{12}+t^7-2 t^6+t^5+1)  \, ,\\
\det \mathbf{B}_\alpha(t) & =(t-1) (t^6-3 t^5+3 t^4-3 t^3+3 t^2-3 t+1)  \, .
\end{align*}
Note that the Alexander polynomial of the knot $8_2$ is the second factor of 
$\det \mathbf{B}_\alpha(t)$ (hence this verifies Theorem~\ref{thm.1}) and that
\begin{align*}
\det \mathbf{A}_\alpha(t) & \equiv (t-1)^2 (t^4+t^3+t^2+t+1)
(t^6-3t^5+3t^4-3t^3+3t^2-3t+1) \quad (\textrm{mod } 2) \,.
\end{align*}
\end{remark}

\subsection*{Acknowledgments}

The authors wish to thank Thang Le for enlightening conversations. 
S.Y. wishes to thank Philip Choi for his help with computer calculations.


\bibliographystyle{hamsalpha}
\bibliography{biblio}
\end{document}